\documentclass[reqno]{amsart}
\usepackage{amsmath,amsthm,latexsym,amssymb,hyperref, amsfonts}
\usepackage{stmaryrd}
\usepackage{eucal}
\usepackage{mathrsfs}
\usepackage[all]{xy}
\usepackage{color}
\usepackage{tikz}
\usepackage{url}
\usepackage{tensor}
\usepackage{comment}
\usepackage{tikz-cd}
\usepackage{enumerate}
\usepackage{enumitem}
\usepackage{mathrsfs}
\usepackage{dutchcal}
\usetikzlibrary{matrix,arrows,decorations.pathmorphing}
\title[The Coalgebraic Enrichment of Algebras in Higher Categories]{The Coalgebraic Enrichment of Algebras in Higher Categories}

\author[Maximilien P\'eroux]{Maximilien P\'eroux}

\theoremstyle{definition}
\newtheorem{defi}{Definition}[section]

\newtheorem{ex}[defi]{Example}
\newtheorem{rem}[defi]{Remark}

\numberwithin{equation}{section}

\theoremstyle{plain}
\newtheorem{thm}[defi]{Theorem}
\newtheorem{prop}[defi]{Proposition}
\newtheorem{lem}[defi]{Lemma}
\newtheorem{cor}[defi]{Corollary}

\renewcommand{\o}{\otimes}
\newcommand{\bI}{\mathbb I}
\newcommand{\I}{\mathbb I}

\newcommand{\ei}{{\mathbb{E}_\infty}}

\renewcommand{\r}{\rightarrow}

\newcommand{\C}{\mathsf{C}}

\newcommand{\op}{^\mathsf{op}}
\newcommand{\M}{\mathsf{M}}

\newcommand{\N}{\mathscr{N}}

\renewcommand{\lim}{\mathsf{lim}}
\newcommand{\colim}{\mathsf{colim}}

\newcommand{\prl}{\mathcal{Pr}^L}


\newcommand{\coalg}{\mathsf{CoAlg}}
\newcommand{\ccoalg}{\mathsf{CoCAlg}}

\newcommand{\sweedler}{\triangleright}


\newcommand{\Dinf}{\mathcal{D}}

\newcommand{\Cinf}{\mathcal{C}}
\newcommand{\Oinf}{\mathcal{O}}
\newcommand{\coalginf}{\mathcal{CoAlg}}
\newcommand{\alginf}{\mathcal{Alg}}

\tikzset{
    labl/.style={anchor=south, rotate=-32, inner sep=.9mm}
}

\tikzset{
    labll/.style={anchor=south, rotate=32, inner sep=.9mm}
}

\begin{document}

\address{Department of Mathematics, University of Pennsylvania,
209 South 33rd Street,
Philadelphia, PA, 19104-6395, USA}
    \email{mperoux@sas.upenn.edu}

\subjclass [2010] {16T15, 18C35, 18D10, 18D20, 18N70, 55P43} 
   
\keywords{algebra, coalgebra, enrichment, operads, $\infty$-categories, presentable}

\begin{abstract} 
We prove that given  $\Cinf$ a presentably symmetric monoidal $\infty$-cate\-gory, and any essentially small $\infty$-operad $\Oinf$, the $\infty$-category of $\Oinf$-algebras in $\Cinf$ is enriched, tensored and cotensored over the presentably symmetric monoidal $\infty$-category of $\Oinf$-coalgebras in $\Cinf$. We provide a higher categorical analogue of the universal measuring coalgebra.
For categories in the usual sense, the result was proved by Hyland, L\'{o}pez Franco, and Vasilakopoulou.
\end{abstract}

\maketitle

\section{Introduction}

The dual of a coalgebra is always an algebra. However, unless we require the algebra to be finite dimensional, the dual of an algebra is not a coalgebra.
The universal measuring coalgebra was introduced in \cite{sweedler} as a way to balance this issue.
In ordinary categories, the measuring provides an enrichment for algebras over coalgebras: this was established in \cite[5.2]{hopfmeasure} and \cite[2.18]{cocat}. We provide here, in Theorem \ref{thm: alg enriched over coalg}, its $\infty$-categorical analogue. In any presentably symmetric monoidal $\infty$-category, the algebra objects are enriched, tensored and cotensored over coalgebras. 
Therefore spaces of algebra morphisms are endowed with a rich structure. We use the notion of enriched $\infty$-categories following \cite{gepner-haugseng} and \cite{hinichenr}.

Algebras in $\infty$-categories formalize the notion of homotopy coherent associative and unital algebras, see \cite{lurie1}. 
Following \cite{lurie2}, we provide a general dual definition of coalgebras in $\infty$-categories. These are objects with a comultiplication that is coassociative up to higher homotopies. We show, in Proposition \ref{prop: presentability of inf coalg}, that if an $\infty$-operad $\Oinf$ is essentially small, the $\infty$-category of $\Oinf$-coalgebras in a presentable $\infty$-category remains presentable.

A similar result would be very challenging to prove in model categories. 
Let $\M$ be a combinatorial symmetric monoidal model category. 
Suppose we have a model structure for algebras $\mathsf{Alg}(\M)$ in $\M$ and a model structure for coalgebras $\mathsf{CoAlg}(\M)$ in $\M$, in which the weak equivalences in both of these models are created by their underlying functor.
One analogous result would be to show that $\mathsf{Alg}(\M)$ is a $\mathsf{CoAlg}(\M)$-model category, in the sense of \cite[4.2.18]{hovey}.
There are several issues with that. A left-induced model structure on $\coalg(\M)$ may not always exist, and when it does, $\M$ may have been replaced by a Quillen equivalent model category that is not a monoidal model category, see \cite{left2}. 
Even in cases where we can left-induce from a monoidal model category, the homotopy theory associated to $\coalg(\M)$ may not be the correct one, see  \cite{perouxshipley} and \cite{coalginDK}. 

\subsection*{Acknowledgement}
The results here are part of my PhD thesis \cite{phd}, and as such, I would like to express my gratitude to my advisor Brooke Shipley for her help and guidance throughout the years. I would also like to thank Rune Haugseng for clarifying and answering many of my questions. I am also thankful for many fruitful conversations with Shaul Barkan and tslil clingman that sparked results in this paper. I thank the referee for helpful comments on the preliminary version of this paper.

\section{Presentability of Coalgebras}
We present here the formal definition of coalgebras in $\infty$-categories, generalizing \cite[Section 3.1]{lurie2}, which was for the case of $\mathbb{E}_\infty$-coalgebras. We define and extend the results for coalgebras over any $\infty$-operad.
Our main result in this section is that coalgebras of a presentably symmetric monoidal $\infty$-category also form a presentable $\infty$-category, see Corollary \ref{cor: presentably imply presentable}.

We invite the reader to review the definition of a symmetric monoidal $\infty$-category in \cite[2.0.0.7]{lurie1}. More generally, for any $\infty$-operad $\Oinf$ (see \cite[2.1.1.10]{lurie1}), we will consider the notion of an \emph{$\Oinf$-monoidal $\infty$-category} as in \cite[2.1.2.15]{lurie1}. If we choose $\Oinf$ to be the commutative $\infty$-operad (\cite[2.1.1.18]{lurie1}), then $\Oinf$-monoidal $\infty$-categories are precisely symmetric monoidal $\infty$-categories.

\begin{defi}\label{defi: coalgebras in inf cat}
Let $\Oinf$ be an $\infty$-operad.
Let $\Cinf$ be an $\mathcal{O}$-monoidal $\infty$-category. An \emph{$\mathcal{O}$-coalgebra object in $\Cinf$} is an $\mathcal{O}$-algebra object in $\Cinf\op$. The $\infty$-category of $\mathcal{O}$-coalgebra objects in $\Cinf$ is defined as the $\infty$-category $\coalginf_\Oinf(\Cinf):={\left(\alginf_\Oinf(\Cinf\op) \right)}\op.$
More generally, given any map $\Oinf'^\otimes\rightarrow \Oinf^\otimes$ of $\infty$-operads , we define the $\infty$-category of $\Oinf'$-coalgebras in $\Cinf$ as $\coalginf_{\Oinf'/\Oinf}(\Cinf)=(\alginf_{\Oinf'/\Oinf}(\Cinf\op))\op$.
\end{defi}

\begin{rem}\label{rem: True definition of coalg in inf}
If $\Cinf$ is an $\mathcal{O}$-monoidal $\infty$-category, then $\Cinf\op$ can be given  an $\mathcal{O}$-monoidal structure uniquely up to contractible choice, as in \cite[2.4.2.7]{lurie1}. 
One can use the work of \cite{dualcocart} to give an explicit choice of the coCartesian fibration for $\Cinf\op$. 
For instance, let $p:\Cinf^\o\rightarrow \Oinf^\o$ be the coCartesian fibration associated to the symmetric monoidal structure of $\Cinf$.
Then straightening of the coCartesian fibration gives a functor: \[F:\Oinf^\o \longrightarrow \widehat{\mathcal{Cat}}_\infty,\]
where $\widehat{\mathcal{Cat}}_\infty$ is the $\infty$-category of (not necessarily small) $\infty$-categories, as in \cite[3.0.0.5]{lurie1}. Then, by \cite[1.5]{dualcocart}, the functor $F$ also classifies a Cartesian fibration:  \[p^\vee :(\Cinf^\o)^\vee \longrightarrow (\Oinf^\o)\op.\] 
An explicit construction is given in \cite[1.7]{dualcocart}. The opposite map:
\[(p^\vee)\op:( (\Cinf^\o)^\vee)\op \longrightarrow \Oinf^\o,\]
is a coCartesian fibration that is classified by:
\[
\begin{tikzcd}
\Oinf^\o \ar{r}{F} & \widehat{\mathcal{Cat}}_\infty \ar{r}{\mathsf{op}} & \widehat{\mathcal{Cat}}_\infty.
\end{tikzcd}
\]
One can check that the fiber of $(p^\vee)\op$ over $X$ in $\Oinf$ is equivalent to $(\Cinf_X)\op$, and thus gives $\Cinf\op$ a $\Oinf$-monoidal structure.
We see that $\Oinf$-coalgebras are sections of the Cartesian fibration $p^\vee:(\Cinf^\o)^\vee\rightarrow (\Oinf^\o)\op$ that  sends inert morphisms in $(\Oinf^\o)\op$ to $p^\vee$-Cartesian morphisms in $(\Cinf^\o)^\vee$. 
\end{rem}

\begin{rem}
Recall from \cite[2.0.0.1]{lurie1} that given any symmetric monoidal (ordinary) category $\C$, one can define a category $\C^\otimes$, such that the nerve $\N(\C^\o)$ is a symmetric monoidal $\infty$-category whose underlying $\infty$-category is $\N(\C)$, see \cite[2.1.2.21]{lurie1}.
If we denote by $\coalg(\C)$ the category of coassociative and counital coalgebras in $\C$, then, dually from \cite[4.21]{groth}, we obtain:
\[
\coalginf_{\mathbb{A}_\infty}\left(\N(\C)\right) \simeq \N\left(\coalg(\C)\right).
\]
Similarly, if we denote by $\ccoalg(\C)$ the category of cocommutative coalgebras in $\C$, we obtain:
\[
\coalginf_{\mathbb{E}_\infty}\left(\N(\C)\right) \simeq \N\left(\ccoalg(\C)\right).
\]
\end{rem}

\begin{prop}[{\cite[3.2.4.4]{lurie1}}]\label{prop: coalgebra are sym mon}
Let $\Oinf$ be an $\infty$-operad.
Let $\Cinf$ be an $\Oinf$-monoidal $\infty$-category.
Then the $\infty$-category $\alginf_\Oinf(\Cinf)$ inherits a $\Oinf$-monoidal structure, given by pointwise tensor product. Dually, the $\infty$-category $\coalginf_\Oinf(\Cinf)$ inherits a $\Oinf$-monoidal structure, given by pointwise tensor product.
\end{prop}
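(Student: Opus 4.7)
The proof plan is to reduce the coalgebra statement to the algebra statement via the definition $\coalginf_\Oinf(\Cinf) = \alginf_\Oinf(\Cinf\op)\op$, combined with the fact from Remark \ref{rem: True definition of coalg in inf} that passing to opposites preserves $\Oinf$-monoidal structures. The algebra assertion itself is exactly the reference \cite[3.2.4.4]{lurie1}, so nothing needs to be done there beyond citing it.

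First I would argue that for any $\Oinf$-monoidal $\infty$-category $\Dinf$, the opposite $\Dinf\op$ carries a canonical $\Oinf$-monoidal structure, and that this construction is an involution: $(\Dinf\op)\op \simeq \Dinf$ as $\Oinf$-monoidal $\infty$-categories. This is precisely what Remark \ref{rem: True definition of coalg in inf} establishes through the dualization procedure of \cite{dualcocart}: starting from a coCartesian fibration $p: \Cinf^\otimes \to \Oinf^\otimes$, one produces $(p^\vee)\op: ((\Cinf^\otimes)^\vee)\op \to \Oinf^\otimes$, fibrewise $(\Cinf_X)\op$, and iterating returns $p$ up to equivalence.

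Next I apply \cite[3.2.4.4]{lurie1} to the $\Oinf$-monoidal $\infty$-category $\Cinf\op$: this endows $\alginf_\Oinf(\Cinf\op)$ with an $\Oinf$-monoidal structure computed pointwise using the tensor product of $\Cinf\op$. Taking opposites and using the first step yields an $\Oinf$-monoidal structure on
\[
\coalginf_\Oinf(\Cinf) = \alginf_\Oinf(\Cinf\op)\op,
\]
and the pointwise tensor product of two $\Oinf$-coalgebras $(C,D)$ in $\Cinf$ corresponds, under double dualization, to the pointwise tensor product of the associated $\Oinf$-algebras in $\Cinf\op$, which is itself pointwise in $\Cinf$. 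The relative version with a morphism of operads $\Oinf' \to \Oinf$ follows identically, since the operadic restriction functor and the op-construction commute.

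The main obstacle is the bookkeeping around the opposite construction: one must verify that the pointwise tensor product in $\alginf_\Oinf(\Cinf\op)$ corresponds, after dualizing, to the pointwise tensor product of coalgebras in $\Cinf$, rather than to some twisted variant. This amounts to checking that the equivalence between the fiber of $(p^\vee)\op$ over $X \in \Oinf$ and $(\Cinf_X)\op$ is natural in the active morphisms of $\Oinf^\otimes$ that implement the tensor product, which is a direct consequence of the naturality of the straightening/unstraightening equivalences used in \cite[1.5, 1.7]{dualcocart}. Once this naturality is recorded, the proposition is immediate.
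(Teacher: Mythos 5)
Your proposal is correct and follows essentially the same route the paper takes implicitly: the algebra case is exactly the cited result \cite[3.2.4.4]{lurie1}, and the coalgebra case follows by applying it to the $\Oinf$-monoidal structure on $\Cinf\op$ furnished by Remark \ref{rem: True definition of coalg in inf} together with the definition $\coalginf_\Oinf(\Cinf)=(\alginf_\Oinf(\Cinf\op))\op$. The paper offers no further argument, so your extra care about the fibrewise identification $(\Cinf_X)\op$ and the involutivity of dualization is a reasonable elaboration of the same idea rather than a different approach.
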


\begin{prop}\label{prop: colimits of coalgebra}
Let $\Cinf$ be a $\Oinf$-monoidal $\infty$-category and let $K$ be a simplicial set. If, for each $X$ in $\Oinf$, the fiber $\Cinf_X$ admits $K$-indexed colimits, then the $\infty$-category $\coalginf_\Oinf(\Cinf)$ admits $K$-indexed colimits, and the forgetful functor $U:\coalginf_\Oinf(\Cinf)\rightarrow \Cinf$ preserves $K$-indexed colimits.
\end{prop}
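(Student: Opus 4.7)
The plan is to dualize the problem and apply Lurie's theorem that $\Oinf$-algebra limits are computed in the underlying $\infty$-category. By Definition \ref{defi: coalgebras in inf cat}, we have
\[
\coalginf_\Oinf(\Cinf) = \bigl(\alginf_\Oinf(\Cinf\op)\bigr)\op,
\]
so a $K$-indexed colimit in $\coalginf_\Oinf(\Cinf)$ is the same data as a $K\op$-indexed limit in $\alginf_\Oinf(\Cinf\op)$. Moreover, under the $\Oinf$-monoidal structure on $\Cinf\op$ described in Remark \ref{rem: True definition of coalg in inf}, the forgetful functor $U\colon \coalginf_\Oinf(\Cinf)\to\Cinf$ is the opposite of the forgetful functor $\alginf_\Oinf(\Cinf\op)\to\Cinf\op$, so preservation of $K$-indexed colimits by the former is equivalent to preservation of $K\op$-indexed limits by the latter.

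Next, I would invoke \cite[3.2.2.5]{lurie1} (and its generalization to arbitrary $\Oinf$-monoidal $\infty$-categories via \cite[3.2.3.1]{lurie1} or the analogous result for $\Oinf$-algebras), which states: if $\Dinf$ is an $\Oinf$-monoidal $\infty$-category such that each fiber $\Dinf_X$ admits $J$-indexed limits, then $\alginf_\Oinf(\Dinf)$ admits $J$-indexed limits and the forgetful functor $\alginf_\Oinf(\Dinf)\to\Dinf$ preserves them.

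Finally, I would apply this with $\Dinf=\Cinf\op$ and $J=K\op$. Each fiber of the opposite $\Oinf$-monoidal structure is $(\Cinf\op)_X \simeq (\Cinf_X)\op$ (again by Remark \ref{rem: True definition of coalg in inf}). Since $\Cinf_X$ admits $K$-indexed colimits by hypothesis, $(\Cinf_X)\op$ admits $K\op$-indexed limits, so the hypothesis of Lurie's theorem is satisfied. Dualizing back yields the desired $K$-indexed colimits in $\coalginf_\Oinf(\Cinf)$, preserved by $U$.

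The only subtlety, which I would not call a serious obstacle, is verifying that the opposite $\Oinf$-monoidal structure of Remark \ref{rem: True definition of coalg in inf} really does have fibers $(\Cinf_X)\op$ and that the forgetful functor on coalgebras is indeed identified with the opposite of the forgetful functor on algebras in $\Cinf\op$; this is already built into the construction via \cite{dualcocart} and so requires only a brief pointer rather than new work.
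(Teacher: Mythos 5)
Your proof is correct and is essentially the paper's own argument: the paper likewise proves this by applying \cite[3.2.2.5]{lurie1} to the coCartesian fibration $(p^\vee)\op\colon ((\Cinf^\o)^\vee)\op\to\Oinf^\o$ from Remark \ref{rem: True definition of coalg in inf}, i.e.\ to the opposite $\Oinf$-monoidal structure, and then dualizes. Your extra care about $K$ versus $K\op$ and the identification of the forgetful functors is exactly the implicit content of that one-line proof; no generalization beyond \cite[3.2.2.5]{lurie1} is needed.
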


\begin{proof}
Apply \cite[3.2.2.5]{lurie1} to the coCartesian fibration $(p^\vee)\op:( (\Cinf^\o)^\vee)\op \longrightarrow \Oinf^\o$ defined in Remark \ref{rem: True definition of coalg in inf}.
\end{proof}

Recall the definition \cite[5.5.0.1]{htt} of a presentable $\infty$-category. Denote $\prl$ the $\infty$-category of presentable $\infty$-categories with small colimit preserving functors. It is endowed with a symmetric monoidal structure (\cite[4.8.1.15]{lurie1}). 

\begin{defi}\label{defi: presentably O-monoidal}
An $\infty$-category $\Cinf$ is said to be \emph{presentably $\Oinf$-monoidal} if it is an $\Oinf$-algebra in $\prl$, i.e., $\Cinf$ is $\Oinf$-monoidal, for each object $X$ in $\Oinf^\otimes$, the fiber $\Cinf^\otimes_X$ is presentable, and for every morphism $f:X\rightarrow Y$ in $\Oinf^\otimes$, the associated functor $f_!:\Cinf^\otimes_X\rightarrow \Cinf^\otimes_Y$ preserves small colimits.
\end{defi}

\begin{ex}
When $\Oinf^\o$ is the commutative $\infty$-operad (see \cite[2.1.1.18]{lurie1}), then a presentably $\Oinf$-monoidal $\infty$-category is called \emph{presentably symmetric monoidal}. Notice that a symmetric monoidal $\infty$-category $\Cinf$ is {presentably symmetric monoidal} if and only if $\Cinf$ is presentable and the tensor product $\otimes: \Cinf\times \Cinf\rightarrow \Cinf$ preserves small colimits in each variable.
\end{ex}

The following dualizes the result on algebras in \cite[3.2.3.5]{lurie1} and generalizes the result for cocommutative coalgebras in \cite[3.1.4]{lurie2}.

\begin{prop}\label{prop: presentability of inf coalg}
Let $\Oinf$ be an essentially small $\infty$-operad.
Let $\Cinf$ be a presentably $\Oinf$-monoidal $\infty$-category.
Then $\coalginf_\Oinf(\Cinf)$ is a presentably $\Oinf$-monoidal $\infty$-category.
\end{prop}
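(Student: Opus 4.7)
The plan is to dualize \cite[3.2.3.5]{lurie1} and extend \cite[3.1.4]{lurie2} from the commutative $\infty$-operad to an arbitrary essentially small $\Oinf$. To establish that $\coalginf_\Oinf(\Cinf)$ is presentably $\Oinf$-monoidal, it suffices to check that each fiber of the coCartesian fibration $\coalginf_\Oinf(\Cinf)^\otimes \to \Oinf^\otimes$ is presentable and that for every morphism $f$ in $\Oinf^\otimes$ the associated functor $f_!$ preserves small colimits. Since the $\Oinf$-monoidal structure on $\coalginf_\Oinf(\Cinf)$ is given by pointwise tensor product (Proposition \ref{prop: coalgebra are sym mon}), both statements reduce to properties of the underlying $\infty$-category $\coalginf_\Oinf(\Cinf)$.

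The main step is to prove that $\coalginf_\Oinf(\Cinf)$ is presentable. Cocompleteness follows from Proposition \ref{prop: colimits of coalgebra} applied with $K$ an arbitrary small simplicial set, using that the fibers of $\Cinf^\otimes$ are cocomplete by presentability. For accessibility, I would fix a regular cardinal $\kappa$ sufficiently large that $\Oinf$ is $\kappa$-small, the fibers of $\Cinf^\otimes$ are $\kappa$-accessible, and the tensor product functors on $\Cinf^\otimes$ preserve $\kappa$-filtered colimits. I would then identify the $\kappa$-compact objects of $\coalginf_\Oinf(\Cinf)$ with those coalgebras whose underlying object in $\Cinf$ is $\kappa$-compact, and prove an $\infty$-categorical analogue of Sweedler's fundamental theorem of coalgebras: every $\Oinf$-coalgebra $C$ is the $\kappa$-filtered colimit of its $\kappa$-compact subcoalgebras. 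Concretely, for any morphism $x \to U(C)$ from a $\kappa$-compact $x \in \Cinf$, I would produce a $\kappa$-compact subcoalgebra $C' \to C$ factoring this morphism, by iteratively closing $x$ under the coherent comultiplication data coming from $\Oinf^\otimes$; the choice of $\kappa$ ensures that this closure terminates at stage $\kappa$.

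With presentability of $\coalginf_\Oinf(\Cinf)$ in hand, it remains to verify colimit preservation of the $f_!$. The forgetful functor $U: \coalginf_\Oinf(\Cinf) \to \Cinf$ is colimit-preserving by Proposition \ref{prop: colimits of coalgebra} and conservative, since a morphism of sections of the Cartesian fibration $p^\vee$ of Remark \ref{rem: True definition of coalg in inf} is an equivalence if and only if it is so objectwise on $\Oinf^\otimes$. Combined with the pointwise description of the $\Oinf$-monoidal structure on coalgebras and the hypothesis that the $f_!$ preserve small colimits on fibers of $\Cinf^\otimes$, this transfers colimit preservation of $f_!$ to $\coalginf_\Oinf(\Cinf)^\otimes$. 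The technical heart of the proof, and the principal obstacle, is the construction of $\kappa$-compact subcoalgebras used in the accessibility step. In the $1$-categorical case Sweedler's argument exploits finite-dimensionality and expresses the comultiplication on a basis, but in the $\infty$-categorical setting one must instead close a $\kappa$-compact subobject simultaneously under an infinite tower of homotopy-coherent comultiplications indexed by $\Oinf^\otimes$, and then verify that this closure remains $\kappa$-compact.
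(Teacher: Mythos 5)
Your reduction to presentability of the underlying $\infty$-category $\coalginf_\Oinf(\Cinf)$ is fine (this is also how the paper begins, via Propositions \ref{prop: coalgebra are sym mon} and \ref{prop: colimits of coalgebra}), and cocompleteness via Proposition \ref{prop: colimits of coalgebra} is correct. The genuine gap is the accessibility step, which is exactly the part you leave as a sketch: you propose to identify the $\kappa$-compact objects of $\coalginf_\Oinf(\Cinf)$ with the coalgebras whose underlying object is $\kappa$-compact, and to prove an $\infty$-categorical fundamental theorem of coalgebras by ``iteratively closing'' a $\kappa$-compact subobject under the comultiplication data. Neither claim is available in this generality. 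The identification of compact objects already fails in spirit for $\kappa=\omega$ (see Remark \ref{rem: compactly generated}: compact generation of $\coalginf_\Oinf(\Cinf)$ is not guaranteed, and the higher-cardinal analogue of the fundamental theorem is stated there only as a conjecture, based on \cite[4.2]{APcoalgebra}). Moreover, the closure argument itself does not transport to the $\infty$-categorical setting: an $\Oinf$-coalgebra structure is an infinite tower of coherence data, there is no well-behaved notion of ``subcoalgebra generated by a subobject'' (no images/factorizations through subobjects are available in a general presentable $\Cinf$), so the transfinite closure you describe cannot even be set up, let alone shown to stop at a $\kappa$-compact stage. So as written the proof's technical heart is missing, and the route chosen to fill it is one the paper explicitly regards as open.

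The paper avoids all of this with a purely formal argument: by Remark \ref{rem: True definition of coalg in inf}, $\coalginf_\Oinf(\Cinf)$ is the $\infty$-category of sections of the Cartesian fibration $p^\vee\colon (\Cinf^\otimes)^\vee \to (\Oinf^\otimes)\op$ carrying inert morphisms to $p^\vee$-Cartesian morphisms, and Lurie's general results \cite[5.4.7.11, 5.4.7.14]{htt} assert that such a section category is presentable as soon as the base is essentially small, the fibers are presentable, and the transition functors are accessible; the closure conditions \textit{(a)}--\textit{(c)} on the subcategory $\prl\subset\widehat{\mathcal{Cat}}_\infty$ are checked by \cite[5.4.3.13, 5.5.3.6, 5.1.2.4]{htt}, and the hypotheses on fibers and transition functors are exactly Definition \ref{defi: presentably O-monoidal} (using \cite[1.3]{dualcocart} to identify the fibers of $p^\vee$ with those of $p$). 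No analysis of compact objects, and no coalgebraic analogue of Sweedler's theorem, is needed. If you want to salvage your outline, you should replace the accessibility step by an appeal to these section-category results (or prove an accessibility statement for lax limits of accessible diagrams), rather than attempting the fundamental theorem of coalgebras.
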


\begin{proof}
By Propositions \ref{prop: coalgebra are sym mon} and \ref{prop: colimits of coalgebra}, we only need to check that $\coalginf_\Oinf(\Cinf)$ is presentable.
Denote the coCartesian fibration $p:\Cinf^\o\r \Oinf^\o$ that defines the $\Oinf$-monoidal structure of $\Cinf$. 
We apply \cite[5.4.7.11, 5.4.7.14]{htt} to the subcategory $\prl$ of $\widehat{\mathcal{Cat}}_\infty$, to the Cartesian fibration $p^\vee: (\Cinf^\o)^\vee\rightarrow (\Oinf^\o)\op$ described in Remark \ref{rem: True definition of coalg in inf}, and the set of inert morphisms in $(\Oinf^\o)\op$.  The subcategory $\prl$ respects the conditions \textit{(a)}, \textit{(b)} and \textit{(c)} of \cite[5.4.7.11]{htt} by \cite[5.4.3.13, 5.5.3.6, 5.1.2.4]{htt}.

Therefore we only need to check that the fibers of $p^\vee$ over any object of $(\Oinf^\o)\op$ are presentable, and that the associated functors between the fibers, induced by the Cartesian structure of $p^\vee$, are accessible.

For any object $X$ in $\Oinf^\o$, the fiber of $p^\vee$ over $X$ is equivalent to the fiber $\Cinf^\otimes_X$ of $p$ over $X$.
By Definition \ref{defi: presentably O-monoidal}, the fibers over $p$ are presentable and the associated functors $\Cinf_X\rightarrow \Cinf_{Y}$ are accessible maps for any morphism $X\rightarrow Y$ in $\Oinf^\otimes$. Thus the induced maps $((\Cinf^\otimes)^\vee)_{Y}\rightarrow ((\Cinf^\otimes)^\vee)_X$ are also accessible, as $\Cinf^\otimes$ and $(\Cinf^\otimes)^\vee$ have the same underyling $\infty$-category $\Cinf$ by \cite[1.3]{dualcocart}. 
\end{proof}

\begin{cor}\label{cor: presentably imply presentable}
Let $\Oinf$ be an essentially small $\infty$-operad.
If $\Cinf$ is a presentably symmetric monoidal $\infty$-category, then $\coalginf_\Oinf(\Cinf)$ is a presentably symmetric monoidal $\infty$-category.
\end{cor}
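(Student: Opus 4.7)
The plan is to deduce this corollary essentially directly from Proposition \ref{prop: presentability of inf coalg}. The central observation is that a presentably symmetric monoidal $\infty$-category $\Cinf$ is presentably $\Oinf$-monoidal for every $\infty$-operad $\Oinf$, via the $\Oinf$-monoidal structure obtained by pulling back the coCartesian fibration $\Cinf^\otimes \rightarrow \Fin$ along the structural map $\Oinf^\otimes \rightarrow \Fin$.

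First, I would identify the fibers of the pulled-back coCartesian fibration: any object $X \in \Oinf^\otimes$ lying above $\langle n \rangle \in \Fin$ has fiber $\Cinf^\otimes_X \simeq \Cinf^n$. Since presentable $\infty$-categories are closed under finite products (\cite[5.5.3.3]{htt}), each such fiber is presentable. Second, I would check colimit preservation: for any morphism $f: X \rightarrow Y$ in $\Oinf^\otimes$, the associated pushforward $f_!: \Cinf^\otimes_X \rightarrow \Cinf^\otimes_Y$ decomposes into iterated tensor products and projections of copies of $\Cinf$, each of which preserves small colimits in each variable by the presentably symmetric monoidal hypothesis on $\Cinf$. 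Together these two verifications establish the conditions of Definition \ref{defi: presentably O-monoidal}, so $\Cinf$ is presentably $\Oinf$-monoidal.

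Applying Proposition \ref{prop: presentability of inf coalg} then produces a presentably $\Oinf$-monoidal structure on $\coalginf_\Oinf(\Cinf)$. Since the $\Oinf$-monoidal structure on $\coalginf_\Oinf(\Cinf)$ from Proposition \ref{prop: coalgebra are sym mon} is inherited from the underlying symmetric monoidal structure of $\Cinf$ by the pullback construction above, the pointwise tensor product on $\coalginf_\Oinf(\Cinf)$ upgrades to a genuine symmetric monoidal structure, which is presentable by the preceding step. I do not expect a serious obstacle here; the only step that requires care is identifying the fibers and pushforwards after pullback along $\Oinf^\otimes \rightarrow \Fin$, and everything else is bookkeeping on top of the cited propositions.
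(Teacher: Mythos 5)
Your reduction steps are fine and are essentially the reduction the paper leaves implicit: pulling back $\Cinf^\otimes\rightarrow\Fin$ along $\Oinf^\otimes\rightarrow\Fin$ does exhibit $\Cinf$ as presentably $\Oinf$-monoidal in the sense of Definition \ref{defi: presentably O-monoidal} (fibers $\Cinf^n$ are presentable, the pushforwards are built from tensor products and hence preserve colimits in each variable, which is all that the proof of Proposition \ref{prop: presentability of inf coalg} actually uses), and Proposition \ref{prop: presentability of inf coalg} then gives presentability of the underlying $\infty$-category $\coalginf_\Oinf(\Cinf)$.

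The gap is in your last step. ``Presentably symmetric monoidal'' for $\coalginf_\Oinf(\Cinf)$ means two things beyond presentability of the underlying category: there is a symmetric monoidal structure (a coCartesian fibration over $\Fin$, namely the pointwise tensor supplied by Proposition \ref{prop: coalgebra are sym mon} applied to the symmetric monoidal $\Cinf$, not an ``upgrade'' of the pulled-back $\Oinf$-monoidal structure), and its tensor product preserves small colimits in each variable. Neither of these is delivered by the phrase ``presentable by the preceding step'': the presentably $\Oinf$-monoidal structure produced by Proposition \ref{prop: presentability of inf coalg} is a fibration over $\Oinf^\otimes$, and its colimit conditions only concern the functors indexed by operations of $\Oinf$ --- if $\Oinf$ is, say, the trivial or $\mathbb{E}_0$ operad, it records no binary multiplication at all, so it cannot imply that $-\otimes D$ on $\coalginf_\Oinf(\Cinf)$ is cocontinuous. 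The missing (short) argument is: colimits in $\coalginf_\Oinf(\Cinf)$ are computed in $\Cinf$ by Proposition \ref{prop: colimits of coalgebra}, the forgetful functor $U$ is conservative and satisfies $U(C\otimes D)\simeq U(C)\otimes U(D)$ for the pointwise tensor, and $\otimes$ on $\Cinf$ preserves small colimits in each variable; hence the canonical map $\colim_i(C_i\otimes D)\rightarrow(\colim_i C_i)\otimes D$ becomes an equivalence after applying $U$ and is therefore an equivalence of coalgebras. With that paragraph added, your argument is complete and agrees with the paper's intended (unwritten) proof, which combines Propositions \ref{prop: coalgebra are sym mon}, \ref{prop: colimits of coalgebra} and \ref{prop: presentability of inf coalg} in exactly this way.
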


\begin{rem}\label{rem: compactly generated}
In general, if $\Cinf$ is compactly generated (\cite[5.5.7.1]{htt}), there is no guarantee that $\coalginf_\Oinf(\Cinf)$ is also compactly generated. However, the \emph{fundamental theorem of coalgebras} (see \cite[II.2.2.1]{sweedler} or \cite[1.6]{GG}) states that if $\Cinf$ is (the nerve of) vector spaces, or chain complexes over a field, then $\coalginf_{\mathbb{A}_\infty}(\Cinf)$ is compactly generated and the forgetful functor $U:\coalginf_{\mathbb{A}_\infty}(\Cinf)\rightarrow \Cinf$ preserves and reflects compact objects.
From \cite[4.2]{APcoalgebra}, if $\kappa$ is an uncountable regular cardinal, we conjecture that the fundamental theorem of coalgebra can be expended in the following sense. If $\Cinf$ is $\kappa$-compactly generated then $\coalginf_\Oinf(\Cinf)$ is $\kappa$-compactly generated and the forgetful functor preserves and reflects $\kappa$-compact objects. 
\end{rem}

The forgetful functor $U:\coalginf_\Oinf(\Cinf) \rightarrow \Cinf$ admits a right adjoint functor $T^\vee:\Cinf\rightarrow \coalginf_\Oinf(\Cinf)$ called the \emph{cofree $\Oinf$-coalgebra functor}.

\begin{cor}\label{cor: cofree}
Let $\Oinf$ be an essentially small $\infty$-operad. Let $\Cinf$ be a presentably $\Oinf$-monoidal $\infty$-category. Then there is forgetful-cofree adjunction:
\[
\begin{tikzcd}[column sep= huge]
U:\coalginf_\Oinf(\Cinf)\ar[shift left=2]{r}[swap]{\perp} &\Cinf\ar[shift left=2]{l} : T^\vee.
\end{tikzcd}
\]
\end{cor}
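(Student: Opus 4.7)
The plan is to invoke the adjoint functor theorem for presentable $\infty$-categories (\cite[5.5.2.9]{htt}), which says that a functor between presentable $\infty$-categories admits a right adjoint if and only if it preserves small colimits. All three hypotheses needed to apply this result have essentially already been assembled in the preceding material.

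First I would observe that $\Cinf$ is presentable by Definition \ref{defi: presentably O-monoidal} (taking $X$ to be the unit of $\Oinf$, or simply noting that the underlying $\infty$-category of a presentably $\Oinf$-monoidal $\infty$-category is presentable). Next, Proposition \ref{prop: presentability of inf coalg} guarantees that $\coalginf_\Oinf(\Cinf)$ is presentably $\Oinf$-monoidal, and hence, in particular, presentable. Then I would apply Proposition \ref{prop: colimits of coalgebra} with $K$ ranging over all small simplicial sets: since each fiber $\Cinf_X$ is presentable and therefore admits all small colimits, the forgetful functor $U:\coalginf_\Oinf(\Cinf)\rightarrow \Cinf$ preserves all small colimits.

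With $U$ identified as a small-colimit-preserving functor between presentable $\infty$-categories, \cite[5.5.2.9]{htt} produces a right adjoint $T^\vee:\Cinf\rightarrow \coalginf_\Oinf(\Cinf)$, which is the desired cofree $\Oinf$-coalgebra functor. There is no significant obstacle here: the real work was done in establishing presentability of $\coalginf_\Oinf(\Cinf)$ in Proposition \ref{prop: presentability of inf coalg}, and the adjoint functor theorem handles the rest formally.
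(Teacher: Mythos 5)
Your proposal is correct and follows exactly the paper's argument: presentability of $\coalginf_\Oinf(\Cinf)$ from Proposition \ref{prop: presentability of inf coalg}, colimit preservation of $U$ from Proposition \ref{prop: colimits of coalgebra}, and the adjoint functor theorem \cite[5.5.2.9]{htt}. The paper's own proof is a one-line citation of these same three ingredients, so nothing is missing.
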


\begin{proof}
Apply Propositions \ref{prop: colimits of coalgebra} and \ref{prop: presentability of inf coalg} and the adjoint functor theorem \cite[5.5.2.9]{htt}.
\end{proof}

In some cases, the $\infty$-category $\coalginf_\Oinf(\Cinf)$ is not mysterious. We recall the following result from Lurie. Let $\Cinf$ be a symmetric monoidal $\infty$-category, and denote by $\Cinf_\mathsf{fd}$ the full subcategory spanned by the dualizable objects, see \cite[4.6.1]{lurie1}. 
It inherits a symmetric monoidal structure. For each dualizable object $X$, we denote $X^\vee$ its dual and this defines a contravariant endofunctor on $\Cinf_\mathsf{fd}$.

\begin{prop}[{\cite[3.2.4]{lurie2}}]\label{prop: finite coalg are alg}
Let $\Cinf$ be a symmetric monoidal $\infty$-category. Then taking dual objects assigns an equivalence of symmetric monoidal $\infty$-categories $(\Cinf_\mathsf{fd})\op \stackrel{\simeq}\longrightarrow \Cinf_\mathsf{fd}$. In particular, for any $\infty$-operad $\Oinf$, we obtain an equivalence $\coalginf_\Oinf(\Cinf_\mathsf{fd})\op\simeq \alginf_\Oinf(\Cinf_\mathsf{fd})$ of symmetric monoidal $\infty$-categories.
\end{prop}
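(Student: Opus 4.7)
The plan is to first establish the symmetric monoidal equivalence $(\Cinf_\mathsf{fd})\op \stackrel{\simeq}\to \Cinf_\mathsf{fd}$, and then deduce the operadic statement by functoriality of $\alginf_\Oinf(-)$.

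For the first part, I would realize duality as an $\infty$-categorical analogue of the 2-categorical statement that adjoints are unique up to canonical isomorphism. Introduce an $\infty$-category $\mathsf{Dual}(\Cinf)$ whose objects are quadruples $(X,Y,e,c)$, where $e:\mathbf{1}\to X\otimes Y$ and $c:Y\otimes X\to \mathbf{1}$ exhibit $Y$ as a dual of $X$ (i.e.\ satisfy the triangle identities in the appropriate $\infty$-categorical sense), together with the two projections
\[
\begin{tikzcd}
& \mathsf{Dual}(\Cinf) \ar{dl}[swap]{\pi_1}\ar{dr}{\pi_2} & \\
\Cinf_\mathsf{fd} & & \Cinf_\mathsf{fd}.
\end{tikzcd}
\]
Since dualizability data is essentially unique, both $\pi_1$ and $\pi_2$ are trivial Kan fibrations. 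However, a $1$-simplex in $\mathsf{Dual}(\Cinf)$ above a morphism $f:X\to X'$ must record the transpose $Y'\to Y$ in the second coordinate, so $\pi_2$ naturally lands in $(\Cinf_\mathsf{fd})\op$. Composing a section of $\pi_1$ with $\pi_2$ therefore produces the desired equivalence $\Cinf_\mathsf{fd}\stackrel{\simeq}\to (\Cinf_\mathsf{fd})\op$.

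Next, I would upgrade this to a symmetric monoidal equivalence. The point is that $\mathsf{Dual}(\Cinf)$ inherits a symmetric monoidal structure from $\Cinf$: the tensor product of duality data $(X_1,Y_1,e_1,c_1)\otimes (X_2,Y_2,e_2,c_2)$ is $(X_1\otimes X_2,\, Y_1\otimes Y_2,\, \tilde e, \tilde c)$, where $\tilde e$ and $\tilde c$ are built from $e_i$ and $c_i$ using the braiding, and the unit $\mathbf{1}$ is self-dual. Both $\pi_1$ and $\pi_2$ are then symmetric monoidal, so the composite equivalence $(\Cinf_\mathsf{fd})\op\stackrel{\simeq}\to \Cinf_\mathsf{fd}$ is too. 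At the level of Lurie's machinery, all of this is the content of \cite[\S 4.6.1]{lurie1}: one works with the appropriate coCartesian fibration over $\Fin$ and invokes that equivalences of symmetric monoidal $\infty$-categories can be detected fiberwise.

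Finally, the operadic statement follows from functoriality. The assignment $\Dinf\mapsto \alginf_\Oinf(\Dinf)$ is a functor on (at least) the $\infty$-category of $\Oinf$-monoidal $\infty$-categories and $\Oinf$-monoidal functors, so it sends the equivalence $(\Cinf_\mathsf{fd})\op\simeq \Cinf_\mathsf{fd}$ to an equivalence $\alginf_\Oinf\bigl((\Cinf_\mathsf{fd})\op\bigr)\simeq \alginf_\Oinf(\Cinf_\mathsf{fd})$. Unwinding Definition~\ref{defi: coalgebras in inf cat}, the left-hand side is by definition $\coalginf_\Oinf(\Cinf_\mathsf{fd})\op$, which yields the claimed equivalence. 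The main obstacle is the first part, namely constructing $\mathsf{Dual}(\Cinf)$ precisely with its symmetric monoidal refinement and verifying the projections are trivial fibrations at the symmetric monoidal level; this is the substantive content carried out in \cite[4.6.1]{lurie1}, while the passage to $\Oinf$-algebras at the end is purely formal.
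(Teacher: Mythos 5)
Your proposal is correct and follows essentially the same route as the paper, which simply imports the symmetric monoidal duality equivalence $(\Cinf_\mathsf{fd})\op\simeq\Cinf_\mathsf{fd}$ from Lurie (\cite[4.6.1]{lurie1}, \cite[3.2.4]{lurie2}) and obtains the operadic claim formally by applying $\alginf_\Oinf(-)$ to this equivalence and unwinding Definition~\ref{defi: coalgebras in inf cat}, exactly as you do. (Only a cosmetic quibble: in your diagram $\pi_2$ should be drawn as landing in $(\Cinf_\mathsf{fd})\op$, as your accompanying text correctly states.)
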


The anti-equivalence above has been generalized to a wider class in \cite[3.31]{dualitySW}.

\section{The Universal Measuring Coalgebra}

Classically, in any presentable symmetric monoidal closed ordinary category, the category of monoids is enriched, tensored and cotensored in the symmetric monoidal category of comonoids. This was proven in \cite[5.2]{hopfmeasure} and \cite[2.18]{cocat}. See also the example of the differential graded case in \cite{anel-joyal}. We  show here in Theorem \ref{thm: alg enriched over coalg} an equivalent statement in $\infty$-categories.

An $\infty$-category shall be defined to be \emph{enriched} over a symmetric monoidal $\infty$-category in the sense of \cite{gepner-haugseng}. Alternatively, the reader can use the definition in \cite[3.1.2, 7.1.1(2)]{hinichenr}. By \cite[3.4.4]{hinichenr}  they are equivalent. The author in \cite{hinichenr} uses the term \emph{precategory} instead of enriched category for various reasons that are explained in \cite[4.8]{hinichenr}, but the reader can safely ignore those technicalities and think of them as enriched $\infty$-categories for all the results we quote in this paper.

An $\infty$-category is \emph{tensored} or \emph{cotensored} over a monoidal $\infty$-category in the sense of \cite[4.2.1.19]{lurie1} or \cite[4.2.1.28]{lurie1} respectively.
Our desired enrichment in Theorem \ref{thm: alg enriched over coalg} will also be enriched in the sense of \cite[4.2.1.28]{lurie1}. It is shown in \cite{comp1} that the definitions of enrichment of Lurie and Gepner-Haugseng are equivalent. 

Throughout this section, let $\Cinf$ be a presentably symmetric monoidal $\infty$-category. 
It is in particular closed, and thus the strong symmetric monoidal functor: \[\otimes: \Cinf\times \Cinf \longrightarrow \Cinf,\] induces a lax symmetric monoidal functor $[-,-]: \Cinf\op \times \Cinf \rightarrow \Cinf$, see \cite[I.3]{runelax}, characterized by the universal mapping property
$\Cinf(X\o Y, Z) \simeq \Cinf(X, [Y, Z])$,
for all $X$, $Y$, and $Z$ in $\Cinf$. In other words, the functor $-\o Y : \Cinf\rightarrow \Cinf$ is a left adjoint to $[Y,-]:\Cinf\rightarrow \Cinf$.

\subsection{The Sweedler cotensor}
Let $\Oinf$ be an essentially small $\infty$-operad. From the lax symmetric monoidal structure of $[-,-]: \Cinf\op \times \Cinf \rightarrow \Cinf$, we obtain a functor:
 \[
 [-,-]:\alginf_\Oinf(\Cinf\op) \times \alginf_\Oinf(\Cinf) \longrightarrow \alginf_\Oinf(\Cinf).\]
By definition of $\Oinf$-coalgebras, we identify $\alginf_\Oinf(\Cinf\op)$ simply as $\coalginf_\Oinf(\Cinf)\op$, and thus obtain the following definition.

\begin{defi}
Let $\Cinf$ and $\Oinf$ be as above. We call the induced functor: \[
[-,-]:\coalginf_\Oinf(\Cinf)\op \times \alginf_\Oinf(\Cinf) \longrightarrow \alginf_\Oinf(\Cinf),
\]
the \emph{Sweedler cotensor}. In the literature, it is sometimes called the \emph{convolution algebra} or the \emph{convolution product}, see \cite[4.0]{sweedler} and \cite{anel-joyal}.
\end{defi}

\begin{rem}
The term convolution product stems from the algebra structure that generalizes the usual convolution product in representation theory. See \cite[2.12.3]{AGK}. It also generalizes the classical convolutions of real functions of compact support, see \cite[2.14.4]{AGK}.
\end{rem}

\begin{ex}
The Sweedler cotensor in the case where $\Oinf=\mathbb{E}_\infty$ and $\Cinf$ is the $\infty$-category of $R$-modules in a symmetric monoidal $\infty$-category, where $R$ is an $\mathbb{E}_\infty$-algebra, was presented in \cite[Section 1.3.1]{lurieelli2}. 
See also \cite[6.6]{nikolaus2016stable}.
\end{ex}

\begin{ex}\label{ex: linear dual of coalgebra}
Let $\I$ be the unit of the symmetric monoidal structure of $\Cinf$. Let $C$ be any $\Oinf$-coalgebra. The Sweedler cotensor $[C, \I]$ is the \emph{linear dual} $C^*$. Therefore the linear dual of an $\Oinf$-coalgebra is always an $\Oinf$-algebra. In particular the linear dual functor $(-)^*:\Cinf\op\rightarrow \Cinf$ lifts to the Sweedler cotensor $(-)^*=[-, \I]:\coalginf_\Oinf(\Cinf)\op\rightarrow \alginf_\Oinf(\Cinf)$.  Here we recover the classical result that the dual of a coalgebra is always an algebra, see \cite[1.1.1]{sweedler}. More precisely, if $C$ is a coalgebra with comultiplication $\Delta:C\rightarrow C\otimes C$ and counit $\varepsilon:C\rightarrow \I$, then $C^*$ is a coalgebra with multiplication:
\[
\begin{tikzcd}
C^* \otimes C^* \ar{r}  &(C\otimes C) ^* \ar{r}{\Delta^*} & C^*,
\end{tikzcd}
\]
where the unlabeled map is given by the lax monoidal structure of the linear dual. The unit is given by using the equivalence $\I^*\simeq \I$.
\end{ex}

\begin{rem}\label{rem: linear dual is the dual}
In a presentably symmetric monoidal $\infty$-category $\Cinf$, given an object $X$ that is dualizable (see \cite[4.6.1]{lurie1}), the dual of $X$ is given precisely by its linear dual $X^*$ (see \cite[3.10]{dualitySW}). Thus, the above defined functor $(-)^*:\coalginf_\Oinf(\Cinf)\op\rightarrow \alginf_\Oinf(\Cinf)$ coincides with the equivalence of Proposition \ref{prop: finite coalg are alg} $(-)^\vee:\coalginf_\Oinf(\Cinf_\mathsf{fd})\op\stackrel{\simeq}\longrightarrow \alginf_\Oinf(\Cinf_\mathsf{fd})$, when we restrict $(-)^*$ to the subcategory $\coalginf_\Oinf(\Cinf_\mathsf{fd})\op$.
\end{rem}

\subsection{The Sweedler tensor}
Since $[-,-]:\Cinf\op \times \Cinf \rightarrow \Cinf$ is a continuous functor in both variables, and limits in $\alginf_\Oinf(\Cinf)$ are computed in $\Cinf$, we get that the Sweedler cotensor is a continuous functor in both variables. 
Fix $C$ an $\Oinf$-coalgebra in $\Cinf$. 
Then the continuous functor: \[
[C, -]: \alginf_\Oinf(\Cinf)\rightarrow \alginf_\Oinf(\Cinf),
\]
is accessible (as filtered colimits in $\alginf_\Oinf(\Cinf)$ are computed in $\Cinf$) and is between presentable $\infty$-categories. Therefore, by the adjoint functor theorem \cite[5.5.2.9]{htt}, the functor $[C,-]$ admits a left adjoint denoted $C\sweedler -: \alginf_\Oinf(\Cinf)\rightarrow \alginf_\Oinf(\Cinf)$.

\begin{defi}
Let $\Cinf$ and $\Oinf$ be as above. We call the induced functor:
\[
-\sweedler - : \coalginf_\Oinf(\Cinf)\times \alginf_\Oinf(\Cinf)\rightarrow \alginf_\Oinf(\Cinf),
\]
the \emph{Sweedler tensor.} Previously, it was called the \emph{Sweedler product} in \cite{anel-joyal} and later in \cite{cocat}.
For $C$ a fixed $\Oinf$-coalgebra, the functor $C\sweedler -$ is left adjoint to $[C,-]$ and we have in particular the equivalence of spaces:
\[
\alginf_\Oinf(C \sweedler A, B) \simeq \alginf_\Oinf(A, [C, B] ),
\]
for any $\Oinf$-algebras $A$ and $B$.
\end{defi}

\begin{ex}
In \cite[3.4.1]{anel-joyal}, an explicit formula of the Sweedler tensor was given in the discrete differential graded case.
\end{ex}

\subsection{The Sweedler hom}
Let now $A$ be an $\Oinf$-algebra in $\Cinf$. The continuous functor: 
\[
[-,A]:\left(\coalginf_\Oinf(\Cinf)\right)\op \rightarrow \alginf_\Oinf(\Cinf),
\]
induces a cocontinuous functor on its opposites: \[
[-,A]\op: \coalginf_\Oinf(\Cinf)\rightarrow (\alginf_\Oinf(\Cinf))\op.
\] 
The cocontinuous functor is from a presentable $\infty$-category to an essentially locally small $\infty$-category: as the opposite of an essentially locally small $\infty$-category is also essentially locally small, and presentable $\infty$-categories are always essentially locally small. Thus, by the adjoint functor theorem \cite[5.5.2.9, 5.5.2.10]{htt}, the functor $[-,A]\op$ admits a right adjoint $\{-,A\}: \alginf_\Oinf(\Cinf)\op \rightarrow \coalginf_\Oinf(\Cinf)$.

\begin{defi}
Let $\Cinf$ and $\Oinf$ be as above. We call the induced functor:
\[
\{-,-\}: \alginf_\Oinf(\Cinf)\op \times \alginf_\Oinf(\Cinf) \rightarrow \coalginf_\Oinf(\Cinf),
\]
the \emph{Sweedler hom}. For $A$ and $B$ any $\Oinf$-algebra in $\Cinf$, the $\Oinf$-coalgebra $\{A, B\}$ is called the \emph{universal measuring coalgebra in $\Cinf$ of $A$ and $B$.}  See \cite[7.0]{sweedler} for the discrete case in vector spaces.
In particular, if we fix $A$, we obtain that $\{-,A\}$ is the right adjoint of $[-, A]\op$ and we have the equivalence of spaces:
\[
\coalginf_\Oinf(\Cinf)(C, \{A, B\}) \simeq \alginf_\Oinf(\Cinf) (B, [C, A]),
\]
for any $\Oinf$-coalgebra $C$.
\end{defi}

\begin{ex}\label{ex: unital measuring coalgebra}
Let $\I$ be the unit of the symmetric monoidal structure of $\Cinf$. Then, for any $\Oinf$-algebra $A$ in $\Cinf$, define $A^\circ$ to be the measuring coalgebra $\{ A, \I \}$. It is called the \emph{Sweedler dual} or \emph{finite dual} of the $\Oinf$-algebra $A$ in $\Cinf$.
In particular, we obtain a functor $(-)^\circ=\{-, \I\}\op: \alginf_\Oinf(\Cinf)\rightarrow \coalginf_\Oinf(\Cinf)\op$, which is the left adjoint of the linear dual functor $(-)^*:\coalginf_\Oinf(\Cinf)\op \rightarrow \alginf_\Oinf(\Cinf)$ defined in Example \ref{ex: linear dual of coalgebra}. 
In particular, we have the equivalence of spaces:
\[\alginf_\Oinf(\Cinf) (A, C^*) \simeq \coalginf_\Oinf(\Cinf)( C, A^\circ),\]
for any $\Oinf$-coalgebra $C$ and any $\Oinf$-algebra $A$.
This was proven in the discrete classical case of vector spaces in \cite[6.0.5]{sweedler}.
By Remark \ref{rem: linear dual is the dual}, when the $\Oinf$-algebra $A$ is dualizable in $\Cinf$, then $A^\circ\simeq A^*$ as an object in $\Cinf$.
\end{ex}

Recall we have defined the cofree $\Oinf$-coalgebra functor $T^\vee:\Cinf\rightarrow \coalginf_\Oinf(\Cinf)$ in Corollary \ref{cor: cofree}. We show that the Sweedler dual defined above of the free $\Oinf$-algebra functor $T:\Cinf\rightarrow \alginf_\Oinf(\Cinf)$ provides an explicit description of $T^\vee$. 

\begin{prop}\label{prop: cofree on double dual}
Let $\Cinf$ be a presentably symmetric monoidal $\infty$-category. Let $\Oinf$ be an essentially small $\infty$-operad. Let $X$ be an object in $\Cinf$. Then the cofree $\Oinf$-coalgebra on the double linear dual $X^{**}=(X^*)^*$ is given by:
\[
T^\vee(X^{**})\simeq\Big( T(X^*)\Big)^\circ.
\]
\end{prop}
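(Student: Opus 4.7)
The plan is to apply the $\infty$-categorical Yoneda lemma: show that both $T^\vee(X^{**})$ and $\bigl(T(X^*)\bigr)^\circ$ corepresent the same functor on $\coalginf_\Oinf(\Cinf)$. Fix an arbitrary $\Oinf$-coalgebra $C$ in $\Cinf$ and compute the mapping space $\coalginf_\Oinf(\Cinf)(C, -)$ on both objects via a chain of the three adjunctions already established: the forgetful-cofree adjunction of Corollary \ref{cor: cofree}, the free-forgetful adjunction for $\Oinf$-algebras, and the $(-)^\circ \dashv (-)^*$ adjunction from Example \ref{ex: unital measuring coalgebra}.

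For the right-hand side, successively applying the $(-)^\circ \dashv (-)^*$ and free-forgetful adjunctions yields
\[
\coalginf_\Oinf(\Cinf)\bigl(C, (T(X^*))^\circ\bigr) \simeq \alginf_\Oinf(\Cinf)\bigl(T(X^*), C^*\bigr) \simeq \Cinf\bigl(X^*, U(C^*)\bigr).
\]
Since the Sweedler cotensor arises from the lax symmetric monoidal internal hom $[-,-]:\Cinf\op\times\Cinf\r \Cinf$ via the canonical lift of lax monoidal functors to $\Oinf$-algebras, the forgetful functors to $\Cinf$ intertwine it with $[-,-]$, so $U(C^*) = U([C,\I]) \simeq [U(C),\I] = U(C)^*$. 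For the left-hand side, the forgetful-cofree adjunction together with $X^{**} = [X^*, \I]$ and the tensor-hom adjunction in the closed symmetric monoidal $\Cinf$ give
\[
\coalginf_\Oinf(\Cinf)\bigl(C, T^\vee(X^{**})\bigr) \simeq \Cinf\bigl(U(C), [X^*,\I]\bigr) \simeq \Cinf\bigl(U(C) \o X^*, \I\bigr) \simeq \Cinf\bigl(X^*, U(C)^*\bigr),
\]
where the last step uses symmetry of $\o$. Both sides are thus naturally equivalent to $\Cinf(X^*, U(C)^*)$ as functors of $C$, and the Yoneda lemma for $\coalginf_\Oinf(\Cinf)$ concludes the argument.

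The only nontrivial input I expect is the natural identification $U(C^*) \simeq U(C)^*$, which amounts to the assertion that the underlying object of the Sweedler cotensor coincides with the internal hom applied to the underlying objects. This is built into the construction of the Sweedler cotensor as the image of $[-,-]$ under the functor that sends lax symmetric monoidal functors to functors on $\Oinf$-algebras, so it ultimately reduces to checking compatibility of the straightening in Remark \ref{rem: True definition of coalg in inf} with the lax monoidal structure of $[-,-]$. Everything else is formal: in $\infty$-categorical language, the three adjunctions furnish equivalences of mapping spaces natural in both arguments, so the naturality in $C$ needed for Yoneda comes for free.
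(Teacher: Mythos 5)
Your proposal is correct and follows essentially the same route as the paper: the paper also fixes an $\Oinf$-coalgebra $C$, runs the chain $\coalginf_\Oinf(\Cinf)(C,(T(X^*))^\circ)\simeq\alginf_\Oinf(\Cinf)(T(X^*),C^*)\simeq\Cinf(X^*,U(C)^*)\simeq\Cinf(X^*\otimes U(C),\I)\simeq\Cinf(U(C),X^{**})$ using the $(-)^\circ\dashv(-)^*$ adjunction, the free-forgetful adjunction, the identification $U(C^*)\simeq U(C)^*$ from the linear-dual example, and the tensor-hom adjunction, then concludes by uniqueness of the right adjoint to $U$, which is the same Yoneda-type step you invoke.
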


\begin{proof}
Let $C$ be an $\Oinf$-coalgebra in $\Cinf$. By Example \ref{ex: linear dual of coalgebra} we have $U(C^*)\simeq U(C)^*$ where $U$ represents the forgetful functor on algebras or coalgebras. Then, we get the following equivalences:
\begin{eqnarray*}
\coalginf_\Oinf(\Cinf)(C, (T(X^*)^\circ)) & \simeq &\alginf_\Oinf(\Cinf)(T(X^*), C^*)\\
& \simeq & \Cinf(X^*, U(C)^*) \\
& \simeq & \Cinf(X^*\otimes U(C), \bI)\\
& \simeq & \Cinf(U(C), X^{**}).
\end{eqnarray*}
Thus we obtain the desired equivalence $T^\vee(X^{**})\simeq (T(X^*))^\circ$ by uniqueness of the right adjoint functor.
\end{proof}

The fundamental theorem of coalgebras, as seen in Remark \ref{rem: compactly generated}, provides an explicit formula for the cofree $\Oinf$-coalgebra that generalizes the approach of \cite[6.4.1]{sweedler} and \cite[1.10]{GG}.

\begin{cor}\label{cor: explicit cofree coalgebra}
Let $\Cinf$ be a presentably symmetric monoidal $\infty$-category. Let $\Oinf$ be an essentially small $\infty$-operad. Suppose $\Cinf$ is compactly generated such that $\coalginf_\Oinf(\Cinf)$ is also compactly generated. Suppose furthermore that every compact object $Y$ in $\Cinf$ is naturally equivalent to its double linear dual $Y\simeq Y^{**}$. Then, for any object $X$ in $\Cinf$, its cofree $\Oinf$-coalgebra is given by:
\[
T^\vee(X) \simeq \mathsf{colim}_i\, \left({\left(T(X_i^*)\right)}^\circ\right),
\]
where $X\simeq\colim_i X_i$ is a filtered colimit of compact objects $X_i$ in $\Cinf$.
\end{cor}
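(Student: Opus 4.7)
The plan is to reduce to the compact case, where Proposition \ref{prop: cofree on double dual} already provides the formula, and then push $T^\vee$ past the filtered colimit $X \simeq \colim_i X_i$. For each compact $X_i$, the reflexivity hypothesis $X_i \simeq X_i^{**}$ combined with Proposition \ref{prop: cofree on double dual} immediately gives $T^\vee(X_i) \simeq T^\vee(X_i^{**}) \simeq (T(X_i^*))^\circ$, so the entire corollary comes down to verifying the interchange $T^\vee(\colim_i X_i) \simeq \colim_i T^\vee(X_i)$.

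Since $T^\vee$ is a right adjoint, this interchange is not automatic, and it is where the compact generation hypotheses enter. I would argue it by accessibility: between presentable $\infty$-categories, the right adjoint of an adjunction preserves filtered colimits if and only if the left adjoint preserves compact objects. Applied to the forgetful-cofree adjunction $U \dashv T^\vee$ of Corollary \ref{cor: cofree}, this reduces the interchange to showing that $U:\coalginf_\Oinf(\Cinf) \to \Cinf$ sends compact coalgebras to compact objects in $\Cinf$. That preservation is the content of the fundamental theorem of coalgebras invoked just before the corollary, and is implicit in the assumption that both $\Cinf$ and $\coalginf_\Oinf(\Cinf)$ are compactly generated in a compatible way, as in the prototypical examples of Remark \ref{rem: compactly generated}. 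Once it is in hand, the composite chain
$$T^\vee(X) \simeq T^\vee(\colim_i X_i) \simeq \colim_i T^\vee(X_i) \simeq \colim_i T^\vee(X_i^{**}) \simeq \colim_i (T(X_i^*))^\circ$$
gives the result.

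The main obstacle is thus pinning down the compact-preservation of $U$. An alternative, more hands-on route to the interchange is to test it against a compact coalgebra $C$: compactness of $C$ in $\coalginf_\Oinf(\Cinf)$ commutes $\coalginf_\Oinf(\Cinf)(C,-)$ past the colimit on the right, the adjunction $U \dashv T^\vee$ turns it into $\Cinf(U(C), X_i)$, and compactness of $U(C)$ in $\Cinf$ commutes $\colim_i$ back inside to land on $\Cinf(U(C), X) \simeq \coalginf_\Oinf(\Cinf)(C, T^\vee(X))$; compact generation of $\coalginf_\Oinf(\Cinf)$ then forces the target objects to agree. Either route hinges on the same property of $U$, so the real work is ensuring that the fundamental theorem of coalgebras is in force.
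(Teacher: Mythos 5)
Your proposal is correct and takes essentially the same approach as the paper: your ``hands-on'' route is precisely the paper's argument, which writes an arbitrary coalgebra as a filtered colimit of compact coalgebras $C\simeq \colim_j C_j$, maps it into the proposed colimit, applies Proposition \ref{prop: cofree on double dual} and the reflexivity $X_i\simeq X_i^{**}$, commutes the colimits using compactness of $C_j$ and of $U(C_j)$, and concludes by uniqueness of right adjoints. The single hinge you flag---that $U:\coalginf_\Oinf(\Cinf)\to\Cinf$ preserves compact objects---is exactly the hinge in the paper's proof as well, where it is justified by Proposition \ref{prop: colimits of coalgebra} together with \cite[5.5.7.2]{htt} rather than by an appeal to the fundamental theorem of coalgebras.
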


\begin{proof}
Let $C$ be an $\Oinf$-coalgebra in $\Cinf$.
Since $\coalginf_\Oinf(\Cinf)$ is compactly generated, then $C$ is the filtered colimit of compact coalgebras $C_j$:
\[
C\simeq \mathsf{colim}_j \, C_j.
\]
By compactness, we obtain the equivalence:
\[
\coalginf_\Oinf(\Cinf)\left(C, \mathsf{colim}_i \left({\left(T(X_i^*)\right)}^\circ\right) \right)\simeq \mathsf{lim}_j \, \mathsf{colim}_i\, \coalginf_\Oinf(\Cinf)(C_j, T(X_i^*)^\circ).
\]
By Proposition \ref{prop: cofree on double dual}, we obtain an equivalence:
\[
\coalginf_\Oinf(\Cinf)(C_j, T(X_i^*)^\circ) \simeq  \Cinf(U(C_j), X_i^{**}).
\]
By hypothesis, the natural equivalence $X\simeq X_i^{**}$ provides the equivalence:
\[
\Cinf(U(C_j), X_i^{**})\simeq \Cinf(U(C_j), X_i) 
\]
Since $U:\coalginf_\Oinf(\Cinf)\rightarrow \Cinf$ preserves colimits and compact objects (by Proposition \ref{prop: colimits of coalgebra} and \cite[5.5.7.2]{htt}), we obtain that $U(C)$ is the filtered colimit of compact objects $U(C_j)$ in $\Cinf$. Therefore:
\[
\mathsf{lim}_j \, \mathsf{colim}_i\, \Cinf(U(C_j), X_i) \simeq \Cinf (U(C), X).
\]
Thus we have shown $\coalginf_\Oinf(\Cinf)\left(C, \mathsf{colim}_i \left({\left(T(X_i^*)\right)}^\circ\right) \right) \simeq \Cinf (U(C), X)$. By unique\-ness of the right adjoint, we obtain the desired equivalences.
\end{proof}

We shall explain where the term \emph{universal measuring} is coming from. Recall that the internal hom property of $\Cinf$ implies that, for any $X$, $Y$ and $Z$ objects in $\Cinf$, there is an equivalence of spaces: $\Cinf(X\otimes Y, Z)\simeq \Cinf(Y, [X,Z])$.
The Sweedler cotensor gives conditions for an $\Oinf$-algebra structure on $[X, Z]$. The following is a generalization of \cite[7.0.1]{sweedler} and \cite[3.3.1]{anel-joyal}.

\begin{defi}
Let $\Cinf$ and $\Oinf$ be as above. Let $C$ be an $\Oinf$-coalgebra in $\Cinf$, and $A$ and $B$ be $\Oinf$-algebras in $\Cinf$. Let $\psi: C\otimes A\rightarrow B$ be a map in $\Cinf$. We say that \emph{$(C, \psi)$ measures $A$ to $B$} (or \emph{$(C, \psi)$ is a measuring of $A$ to $B$}) if the adjoint map $A\rightarrow [C,B]$ is a map of $\Oinf$-algebras in $\Cinf$.
\end{defi}

We give examples generalized from \cite{anel-joyal}.

\begin{ex}[{\cite[3.3.3]{anel-joyal}}]
If $\I$ is the unit of the symmetric monoidal structure of $\Cinf$, then a map $\I\otimes A\rightarrow B$ in $\Cinf$  is a measuring of $A$ to $B$ if and only if it is a map in $\alginf_\Oinf(\Cinf)$.
\end{ex}

\begin{ex}[{\cite[3.3.4]{anel-joyal}}]
The adjoint of the identity map on $[C,A]$ is a map $C\otimes [C, A] \rightarrow A$ and is always a measuring. In particular, the evaluation $C\otimes C^*\rightarrow \I$ is always a measuring of $C^*$ to $\I$. 
Similarly $A^\circ \otimes A \rightarrow \I$ is a measuring of $A$ to $\I$.
It is claimed to be the origin of the term \emph{measure} in \cite[2.12.10]{AGK}.
\end{ex}

By definition of the Sweedler hom, as we have: \[\coalginf_\Oinf(\Cinf)(C, \{A, B\}) \simeq \alginf_\Oinf(\Cinf) (B, [C, A]),\]
we see that the $\Oinf$-coalgebra $\{A, B\}$, together with the natural map $\{A, B\} \otimes A \rightarrow B$ (adjoint of the identity over $\{A, B\}$), is indeed the universal measuring algebra of $A$ to $B$, in the following sense. Given any other measuring $(C, \psi)$ of $A$ to $B$, there exists a unique (up to contractible choice) map $C\rightarrow \{A, B\}$ of $\Oinf$-coalgebras in $\Cinf$ such that the following diagram commutes in $\Cinf$:
\[
\begin{tikzcd}
C\otimes A \ar{dr}{\psi} \ar[dashed]{d}\\
\{A, B\} \otimes A \ar{r} & B.
\end{tikzcd}
\]
\begin{rem}
Following \cite[3.3.6]{anel-joyal}, we see that, given maps $A'\rightarrow A$ and $B\rightarrow B'$ in $\alginf_\Oinf(\Cinf)$, a map $C'\rightarrow C$ in $\coalginf_\Oinf(\Cinf)$, together with a map $A\rightarrow [C,B]$ in $\alginf_\Oinf(\Cinf)$, we obtain the following map in $\alginf_\Oinf(\Cinf)$:
\[
\begin{tikzcd}
A'\ar{r} & A \ar{r} & {[C,B]} \ar{r} & {[C', B']}.
\end{tikzcd}
\]
This shows that the space of measurings provides a functor: \[\coalginf_\Oinf(\Cinf)\op\times \alginf_\Oinf(\Cinf)\op\times  \alginf_\Oinf(\Cinf) \longrightarrow \mathcal{S},\]
that is representable in each variable with respect to the Sweedler hom, tensor and cotensor.
\end{rem}

\begin{rem}
We can generalize a result from \cite{measuringthh}.
Let $\Cinf$ be a presentably symmetric monoidal $\infty$-category.
Let $A$ and $B$ be  $\ei$-algebras in $\Cinf$. Recall that the topological Hochschild homology $\mathsf{THH}(A)$ is given by tensoring over the circle:
\[
\mathsf{THH}(A)\simeq A\otimes S^1.
\]
In particular, if $(C, \psi)$ is a measuring of $A$ and $B$, then from the map of $\ei$-algebras $A\rightarrow [C, B]$ we obtain a map of $\ei$-algebras:
\[
A\otimes S^1\longrightarrow [C, B]\otimes S^1 \longrightarrow [C, B\otimes S^1].
\]
Therefore $(C,\psi)$ also determines a natural measuring of $\mathsf{THH}(A)$ to $\mathsf{THH}(B)$. Therefore we obtain a map of $\ei$-coalgebras:
\[
\{A, B\}\longrightarrow \{\mathsf{THH}(A), \mathsf{THH}(B)\}.
\]
\end{rem}

\begin{rem}
From \cite[1.3.73]{anel-joyal}, the primitive elements of the measuring coalgebra $\{A, B\}$ are the derivations from $A$ to $B$. In particular, the subcoalgebra of primitive elements of the coalgebra $\{A ,A\}$ is equivalent to the tangent complex of $A$.
\end{rem}

\subsection{The enrichment in coalgebras}
Let $\Dinf^\o$ be a monoidal $\infty$-category. Its \emph{reverse}, denoted $(\Dinf^\o)^\mathsf{rev}$ or simply $\Dinf^\mathsf{rev}$, is defined in \cite[2.13.1]{hinichenr}. Essentially, $\Dinf$ and $\Dinf^\mathsf{rev}$ have the same underlying $\infty$-category but the tensor $X\o Y$ in $\Dinf^\mathsf{rev}$ corresponds precisely to $Y\o X$ in $\Dinf$. 
Left modules over $\Dinf$ correspond to right modules over $\Dinf^\mathsf{rev}$.
If $\Dinf$ is symmetric, then $\Dinf^\mathsf{rev}\simeq\Dinf$ by \cite[2.13.4]{hinichenr}. 
We shall be interested with the\emph{ reverse opposite}, denoted $\Dinf^\mathsf{rop}=(\Dinf\op)^\mathsf{rev}$, of a monoidal $\infty$-category $\Dinf$. 
The following is a generalization of the discrete ordinary case \cite[5.1]{hopfmeasure}.

\begin{lem}\label{lem: opmonoidal sweedler cotensor}
Let $\Cinf$ and $\Oinf$ be as above. Then the Sweedler cotensor endows the $\infty$-category $\alginf_\Oinf(\Cinf)$ with the structure of a right module over the reverse opposite of the (symmetric) monoidal $\infty$-category $\coalginf_\Oinf(\Cinf)$.
\end{lem}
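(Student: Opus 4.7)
The plan is to transfer the canonical self-enrichment of $\Cinf$ through the functor $\alginf_\Oinf(-)$, from which the claimed module structure on $\alginf_\Oinf(\Cinf)$ will follow.

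Since $\Cinf$ is presentably symmetric monoidal it is closed, so the internal hom $[-,-]\colon \Cinf\op\times \Cinf\to \Cinf$ satisfies the natural exponential law $[X\otimes Y, Z]\simeq[X, [Y, Z]]$. I would use this, together with the symmetry and unit coherences of $\otimes$, to exhibit $\Cinf$ as a right module over $\Cinf^\mathsf{rop}$ in the $\infty$-category of $\Oinf$-monoidal $\infty$-categories. Concretely, one packages this datum into an $\mathbb{RM}^\otimes$-monoidal $\infty$-category in the sense of \cite[4.2.1.19]{lurie1}, whose algebra component is $\Cinf^\mathsf{rop}$, whose module component is $\Cinf$, and whose action functor is $[-,-]$. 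This is the standard self-enrichment of a closed symmetric monoidal $\infty$-category, obtained formally from the two-variable adjunction $(-\otimes-, [-,-])$.

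Next I would apply $\alginf_\Oinf(-)$. By Proposition \ref{prop: coalgebra are sym mon} (\cite[3.2.4.4]{lurie1}), this functor is lax symmetric monoidal on $\Oinf$-monoidal $\infty$-categories, and hence preserves right module structures: an $\mathbb{RM}^\otimes$-monoidal $\infty$-category is sent to another, with action obtained by postcomposing with $\alginf_\Oinf$. This produces a right module structure on $\alginf_\Oinf(\Cinf)$ over $\alginf_\Oinf(\Cinf^\mathsf{rop})$ whose action is, by construction, the Sweedler cotensor. Finally, writing $\Cinf^\mathsf{rop}=(\Cinf\op)^\mathsf{rev}$ and invoking Definition \ref{defi: coalgebras in inf cat}, I would identify
$\alginf_\Oinf(\Cinf^\mathsf{rop})\simeq (\alginf_\Oinf(\Cinf\op))^\mathsf{rev}=(\coalginf_\Oinf(\Cinf)\op)^\mathsf{rev}=\coalginf_\Oinf(\Cinf)^\mathsf{rop}$,
completing the proof.

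The main obstacle is the first step: rigorously constructing the $\mathbb{RM}^\otimes$-monoidal $\infty$-category that encodes the self-enrichment of $\Cinf$, compatibly with the $\Oinf$-operadic data. In the ordinary categorical setting this is the clean observation of Hyland, L\'opez Franco, and Vasilakopoulou; in $\infty$-categories it requires careful bookkeeping with Lurie's operadic fibrations. A possible alternative route is to verify the module axioms for the Sweedler cotensor $[-,-]\colon \coalginf_\Oinf(\Cinf)\op\times\alginf_\Oinf(\Cinf)\to\alginf_\Oinf(\Cinf)$ directly, deducing associativity and unitality from the exponential law and the lax symmetric monoidal structure of $\alginf_\Oinf$.
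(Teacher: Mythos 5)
Your overall skeleton matches the paper's: exhibit $\Cinf$ as a right module over $\Cinf^{\mathsf{rop}}$ via its internal hom, push that structure through $\alginf_\Oinf(-)$ using Proposition \ref{prop: coalgebra are sym mon}, and identify $\alginf_\Oinf(\Cinf^{\mathsf{rop}})\simeq \coalginf_\Oinf(\Cinf)^{\mathsf{rop}}$. The gap sits exactly where you flag the ``main obstacle'': you never actually produce the coherent module structure on $\Cinf$, and your fallback of verifying the module axioms directly is not available in the $\infty$-categorical setting --- associativity and unitality of an action are not properties one checks from the exponential law and symmetry coherences, but an infinite tower of coherence data one must construct, so as written the proposal does not constitute a proof of the key step.

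The missing idea, which lets the paper dispose of this step in one line, is to use the fact (already quoted earlier in this section from \cite[I.3]{runelax}) that $[-,-]\colon \Cinf\op\times\Cinf\to\Cinf$ is a \emph{lax symmetric monoidal} functor. A lax symmetric monoidal functor is precisely a map of commutative algebras in $\widehat{\mathcal{Cat}}_\infty$ with its Cartesian monoidal structure; hence $\Cinf$ is a $\Cinf\op\times\Cinf$-algebra, in particular a left $\Cinf\op$-module, i.e.\ a right $\Cinf^{\mathsf{rop}}$-module --- no hand-built $\mathbb{RM}^\otimes$-fibration and no coherence bookkeeping is required. With that input your second step goes through essentially as you describe: applying $\alginf_\Oinf(-)$ to this lax symmetric monoidal functor (using Proposition \ref{prop: coalgebra are sym mon} together with $\alginf_\Oinf(\Cinf\op\times\Cinf)\simeq \alginf_\Oinf(\Cinf\op)\times\alginf_\Oinf(\Cinf)$) produces the Sweedler cotensor as the action, and your identification $\alginf_\Oinf(\Cinf^{\mathsf{rop}})\simeq (\alginf_\Oinf(\Cinf\op))^{\mathsf{rev}}\simeq \coalginf_\Oinf(\Cinf)^{\mathsf{rop}}$ is the same one the paper uses.
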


\begin{proof}
Since the internal hom $[-,-]:\Cinf\times \Cinf\op\rightarrow \Cinf$ is a lax symmetric monoidal functor (see \cite[I.3]{runelax}), then it is a map of commutative algebras in $\widehat{\mathcal{Cat}}_\infty$, the $\infty$-category of $\infty$-categories endowed with its Cartesian monoidal structure. This shows that $\Cinf$ is a $\Cinf\op\times \Cinf$-algebra in $\widehat{\mathcal{Cat}}_\infty$, and thus in particular, $\Cinf$ is a left module over $\Cinf\op$. 
Hence  $\Cinf$ is a right module over its reverse opposite $\Cinf^\mathsf{rop}$ via its internal hom.
Therefore, by Proposition \ref{prop: coalgebra are sym mon}, the $\infty$-category $\alginf_\Oinf(\Cinf)$ is a right module over $\alginf_\Oinf(\Cinf^\mathsf{rop})$ via the Sweedler cotensor. Since $\alginf_\Oinf(\Cinf^\mathsf{rev})\simeq \alginf_\Oinf(\Cinf)^\mathsf{rev}$, then $\alginf_\Oinf(\Cinf^\mathsf{rop})\simeq \coalginf_\Oinf(\Cinf)^\mathsf{rop}$.
\end{proof}

Since $\coalginf_\Oinf(\Cinf)$ is a presentably symmetric monoidal $\infty$-category, it is enriched over itself by \cite[7.4.10]{gepner-haugseng}. 
We denote $\underline{\coalginf_\Oinf(\Cinf)}(D, E)$ the $\Oinf$-coalgebra in $\Cinf$ which classifies coalgebra maps from $D$ to $E$, characterized by the universal mapping property:
\[
\coalginf_\Oinf(\Cinf) \Big( C\o D, E\Big) \simeq\coalginf_\Oinf(\Cinf) \left( C,  \underline{\coalginf_\Oinf(\Cinf)}(D, E)\right).
\]

\begin{thm}\label{thm: alg enriched over coalg}
Let $\Cinf$ be a presentably symmetric monoidal $\infty$-category. Let $\Oinf$ be an essentially small $\infty$-operad.
The $\infty$-category of $\Oinf$-algebras $\alginf_\Oinf(\Cinf)$ is enriched over the symmetric monoidal $\infty$-category $\coalginf_\Oinf(\Cinf)$, via the Sweedler hom. Moreover it is tensored and cotensored respectively using the Sweedler tensor and Sweedler cotensor. In particular, we have an equivalence of $\Oinf$-coalgebras:
\[
\underline{\coalginf_\Oinf(\Cinf)} \Big( C, \{ A, B\} \Big) \simeq \Big\{A , [C, B] \Big\} \simeq  \Big\{ C \sweedler A, B \Big\},
\]
for any $\Oinf$-coalgebra $C$ in $\Cinf$ and any $\Oinf$-algebras $A$ and $B$ in $\Cinf$.
\end{thm}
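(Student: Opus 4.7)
My plan is to leverage Lemma \ref{lem: opmonoidal sweedler cotensor} to upgrade $\alginf_\Oinf(\Cinf)$ to a presentable left $\coalginf_\Oinf(\Cinf)$-module, and then invoke the canonical enrichment of such modules. Starting from the right $\coalginf_\Oinf(\Cinf)^\mathsf{rop}$-module structure of Lemma \ref{lem: opmonoidal sweedler cotensor} (equivalent to a right $\coalginf_\Oinf(\Cinf)\op$-module since $\coalginf_\Oinf(\Cinf)$ is symmetric monoidal), I would pass to the parameterized left adjoint in the algebra variable, via the general framework for adjoint modules as in \cite[7.3.2.7]{lurie1}, to obtain a left $\coalginf_\Oinf(\Cinf)$-module structure on $\alginf_\Oinf(\Cinf)$ whose action is the Sweedler tensor. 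I would then verify this module lives in $\prl$: both sides are presentable by Proposition \ref{prop: presentability of inf coalg} and \cite[3.2.3.5]{lurie1}; $-\sweedler A$ preserves colimits, being a left adjoint; and $C\sweedler-$ preserves colimits since, by Yoneda, this reduces to $[-,B]$ sending colimits in $\coalginf_\Oinf(\Cinf)$ to limits in $\alginf_\Oinf(\Cinf)$, both being computed in $\Cinf$ by Proposition \ref{prop: colimits of coalgebra} and \cite[3.2.2.1]{lurie1}, combined with closedness of $\Cinf$.

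Next, I would invoke the general principle that a presentable left module over a presentably symmetric monoidal $\infty$-category is canonically enriched, tensored, and cotensored over it, with enriched hom $\underline{\alginf_\Oinf(\Cinf)}(A, B)$ characterized as the right adjoint of $-\sweedler A$ applied to $B$, and cotensor characterized as the right adjoint of $C\sweedler-$. This is available in Lurie's framework via \cite[4.2.1.28, 4.2.1.33]{lurie1} and in Gepner--Haugseng's via \cite[Section 7.4]{gepner-haugseng}, the two being equivalent by \cite{comp1}. The required right adjoint of $-\sweedler A$ exists by the adjoint functor theorem \cite[5.5.2.9]{htt}; by uniqueness of right adjoints together with the defining universal property, it coincides with $\{A,-\}$, the Sweedler hom. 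Likewise, the right adjoint of $C\sweedler-$ is the Sweedler cotensor $[C,-]$ by the defining adjunction. This identifies the enrichment with the Sweedler hom, the tensored structure with the Sweedler tensor, and the cotensored structure with the Sweedler cotensor.

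Finally, the three-way equivalence follows by Yoneda. For any $D\in\coalginf_\Oinf(\Cinf)$, the defining universal properties yield
\begin{align*}
\coalginf_\Oinf(\Cinf)\bigl(D,\,\underline{\coalginf_\Oinf(\Cinf)}(C, \{A, B\})\bigr)
&\simeq \coalginf_\Oinf(\Cinf)(D\otimes C, \{A, B\}) \\
&\simeq \alginf_\Oinf(\Cinf)\bigl(A, [D\otimes C, B]\bigr) \\
&\simeq \alginf_\Oinf(\Cinf)\bigl(A, [D, [C, B]]\bigr) \\
&\simeq \coalginf_\Oinf(\Cinf)\bigl(D, \{A, [C, B]\}\bigr),
\end{align*}
and symmetrically $\coalginf_\Oinf(\Cinf)(D, \{C\sweedler A, B\}) \simeq \alginf_\Oinf(\Cinf)(A, [C\otimes D, B])$, matching the above by symmetry of $\otimes$. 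The main obstacle I expect is identifying the abstract enrichment produced by the general machinery with the concrete one given by the Sweedler hom, and carefully tracking the op/rev bookkeeping when passing from the cotensor module of Lemma \ref{lem: opmonoidal sweedler cotensor} to the tensor module in $\prl$; once that is in place, the explicit formulae are essentially formal consequences of the defining adjunctions.
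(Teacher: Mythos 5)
Your route is genuinely different from the paper's. The paper never constructs the Sweedler tensor as a coherent action: it applies \cite[7.4.9]{gepner-haugseng} directly to the cotensor action of Lemma \ref{lem: opmonoidal sweedler cotensor}, i.e.\ to $\alginf_\Oinf(\Cinf)\op$ viewed as a left $\coalginf_\Oinf(\Cinf)$-module tensored via $[-,-]\op$, using only the \emph{pointwise} right adjoints $\{-,A\}$, and then passes to opposite enriched categories by \cite[6.2.1]{hinichenr} to get the enrichment of $\alginf_\Oinf(\Cinf)$ with cotensor $[-,-]$. You instead propose to transfer the module structure across the pointwise adjunctions $C\sweedler-\dashv[C,-]$ to obtain a left $\coalginf_\Oinf(\Cinf)$-module in $\prl$ whose action is the Sweedler tensor, and then invoke the canonical enrichment/tensoring/cotensoring of presentable modules. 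That endpoint is legitimate, and your closing Yoneda computation of the three-way equivalence is correct and natural in the test coalgebra $D$, consistent with the convention $\coalginf_\Oinf(\Cinf)(C,\{A,B\})\simeq\alginf_\Oinf(\Cinf)(A,[C,B])$ implicit in the theorem's display.

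The weak point is the transfer step itself, which you treat as a citation but which is the real technical content of your route. Assembling the pointwise left adjoints $C\sweedler-$ into a coherent $\coalginf_\Oinf(\Cinf)$-action on $\alginf_\Oinf(\Cinf)$ is precisely the coherence problem that the paper's use of \cite[7.4.9]{gepner-haugseng} and \cite[6.2.1]{hinichenr} is designed to sidestep, and \cite[7.3.2.7]{lurie1} does not supply it: that result is doctrinal adjunction (the right adjoint of a symmetric monoidal functor is lax symmetric monoidal), not a device for producing a new module structure from a parameterized family of adjoints. To make the step honest you would need something like the adjointability/mates technology of \cite[4.7.4]{lurie1}, or an equivalence of the form $\mathrm{Fun}^{R}(\mathcal{A},\mathcal{A})\simeq\mathrm{Fun}^{L}(\mathcal{A},\mathcal{A})\op$ compatible with composition, applied to $C\mapsto[C,-]$; this is true but is genuine work rather than a reference. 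Two smaller slips: your colimit-preservation justifications are swapped --- $C\sweedler-$ is cocontinuous because it is by definition a left adjoint, whereas it is $-\sweedler A$ that needs the Yoneda argument using that $[-,B]$ carries colimits of coalgebras (computed in $\Cinf$) to limits of algebras (computed in $\Cinf$); and at that stage $-\sweedler A$ cannot be called ``a left adjoint,'' since its right adjoint $\{A,-\}$ is part of what is being established.
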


\begin{proof}
By Lemma \ref{lem: opmonoidal sweedler cotensor}, the $\infty$-category $\alginf_\Oinf(\Cinf)\op$ is a left module over the symmetric monoidal $\infty$-category $\coalginf_\Oinf(\Cinf)$, via $[-,-]\op$ the opposite of the Sweedler cotensor, such that $[-,A]\op:\coalginf_\Oinf(\Cinf)\rightarrow \alginf_\Oinf(\Cinf)\op$ admits a right adjoint $\{-,A\}$ for all $A$ in $\alginf_\Oinf(\Cinf)$.
By \cite[7.4.9]{gepner-haugseng} (see also \cite[4.2.1.33]{lurie1} and \cite[6.3.1, 7.2.1]{hinichenr}), this shows that $\alginf_\Oinf(\Cinf)\op$ is enriched over $\coalginf_\Oinf(\Cinf)$, with tensor $[-,-]\op$. Thus, by \cite[6.2.1]{hinichenr}, we get that $\alginf_\Oinf(\Cinf)$ is enriched over $\coalginf_\Oinf(\Cinf)$, with cotensor $[-,-]$.\qedhere
\end{proof}


\begin{rem}
The previous theorem shows that we can enrich the equivalence in Example \ref{ex: unital measuring coalgebra} to an equivalence of $\Oinf$-coalgebras in $\Cinf$:
\[
\underline{\coalginf_\Oinf(\Cinf)} \Big( C,A^\circ \Big) \simeq \Big\{A , C^* \Big\} \simeq  \Big(C \sweedler A \Big)^\circ,
\]
for any $\Oinf$-coalgebra $C$ and any $\Oinf$-algebra $A$.
\end{rem}

\begin{cor}
Let $\Cinf$ be a presentably symmetric monoidal $\infty$-category. Let $\Oinf$ be an essentially small $\infty$-category. Let $A$ be an $\Oinf$-algebra in $\Cinf$. Let $C$ be an $\Oinf$-coalgebra in $\Cinf$. Then there are adjunctions of enriched $\infty$-categories over $\coalginf_\Oinf(\Cinf)$:
\begin{equation}\label{eq: I}
\begin{tikzcd}[column sep= huge]
C\sweedler -: \alginf_\Oinf(\Cinf)\ar[shift left=2]{r}[swap]{\perp} &\alginf_\Oinf(\Cinf)\ar[shift left=2]{l} : [C, -].
\end{tikzcd}
\end{equation}
\begin{equation}\label{eq: II}
\begin{tikzcd}[column sep= huge]
[-, A]\op: \coalginf_\Oinf(\Cinf)\ar[shift left=2]{r}[swap]{\perp} &\alginf_\Oinf(\Cinf)\op\ar[shift left=2]{l} : \{-, A\}.
\end{tikzcd}
\end{equation}
\begin{equation}\label{eq: III}
\begin{tikzcd}[column sep= huge]
-\sweedler A: \coalginf_\Oinf(\Cinf)\ar[shift left=2]{r}[swap]{\perp} &\alginf_\Oinf(\Cinf)\ar[shift left=2]{l} : \{A, -\}.
\end{tikzcd}
\end{equation}
\end{cor}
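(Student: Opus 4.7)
Proof proposal. The plan is to derive all three enriched adjunctions directly from Theorem \ref{thm: alg enriched over coalg}, whose triple equivalence
\[
\underline{\coalginf_\Oinf(\Cinf)}(C, \{A, B\}) \simeq \{A, [C, B]\} \simeq \{C \sweedler A, B\}
\]
already packages the enriched mapping-object identifications underlying all three pairs of functors.

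The underlying (unenriched) $\infty$-adjunctions are already in hand. Adjunction \eqref{eq: I} is the defining adjunction $C \sweedler - \dashv [C, -]$ of the Sweedler tensor. Adjunction \eqref{eq: II} is the defining adjunction $[-, A]\op \dashv \{-, A\}$ of the Sweedler hom. Adjunction \eqref{eq: III} is obtained by applying the global-sections functor $\coalginf_\Oinf(\Cinf)(\I, -)$ to the outer equivalence above, which yields
\[
\alginf_\Oinf(\Cinf)(C \sweedler A, B) \simeq \coalginf_\Oinf(\Cinf)(C, \{A, B\})
\]
naturally in $C$ and $B$ and exhibits $\{A, -\}$ as a right adjoint to $- \sweedler A$ with $A$ fixed.

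To promote each to a $\coalginf_\Oinf(\Cinf)$-enriched adjunction, I would invoke the characterization of enriched adjunctions in \cite[7.4]{gepner-haugseng}, equivalently \cite[6.2, 7.2]{hinichenr}: the data amount to a pair of $\coalginf_\Oinf(\Cinf)$-enriched functors together with a natural equivalence of enriched hom-objects. All six functors appearing in \eqref{eq: I}-\eqref{eq: III} are assembled from the $\coalginf_\Oinf(\Cinf)$-module structure on $\alginf_\Oinf(\Cinf)$ provided by Lemma \ref{lem: opmonoidal sweedler cotensor}, together with the tensor/cotensor/hom structure supplied by Theorem \ref{thm: alg enriched over coalg}, so each is canonically a $\coalginf_\Oinf(\Cinf)$-enriched functor; the case involving $\alginf_\Oinf(\Cinf)\op$ uses the induced enrichment with hom-object $\underline{\alginf_\Oinf(\Cinf)\op}(A\op, B\op) := \{B, A\}$. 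The required enriched hom equivalences are then precisely the three factors of the theorem: the identity $\{C \sweedler A, B\} \simeq \{A, [C, B]\}$ (second equality, $C$ fixed) gives \eqref{eq: I}; the identity $\{B, [C, A]\} \simeq \underline{\coalginf_\Oinf(\Cinf)}(C, \{B, A\})$ (first equality with $A \leftrightarrow B$ swapped, $A$ fixed) gives \eqref{eq: II}; and the outer identity $\{C \sweedler A, B\} \simeq \underline{\coalginf_\Oinf(\Cinf)}(C, \{A, B\})$ ($A$ fixed) gives \eqref{eq: III}.

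The main subtlety is to confirm that these equivalences are natural in the free variables in each of the three cases, so that they genuinely encode enriched adjunctions rather than pointwise equivalences of $\Oinf$-coalgebras. This naturality is built into the construction of $\{-, -\}$, $\sweedler$, and $\underline{\coalginf_\Oinf(\Cinf)}$ as adjoints arising from the $\coalginf_\Oinf(\Cinf)$-module structure of Lemma \ref{lem: opmonoidal sweedler cotensor}, together with the representability of the functor of measurings noted in the remark following Theorem \ref{thm: alg enriched over coalg}. Once this naturality is secured, each of \eqref{eq: I}-\eqref{eq: III} follows formally from the corresponding factor of the triple equivalence.
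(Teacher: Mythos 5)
The paper gives no proof of this corollary, treating it as an immediate consequence of Theorem \ref{thm: alg enriched over coalg}, and your argument is exactly that intended route: the underlying adjunctions are the defining ones for $\sweedler$ and $\{-,-\}$, and your matching of the three factors of the triple equivalence to the three enriched adjunctions (including the identification $\underline{\alginf_\Oinf(\Cinf)\op}(A\op,B\op)\simeq \{B,A\}$ for the opposite enrichment) is correct. The one soft spot you flag — upgrading pointwise equivalences of hom-objects to coherent enriched adjunctions — is indeed handled by the references you cite (\cite[6.2--6.3, 7.2]{hinichenr}, \cite[7.4]{gepner-haugseng}), which show that for tensored and cotensored $\coalginf_\Oinf(\Cinf)$-enriched $\infty$-categories an adjunction whose left adjoint is $\coalginf_\Oinf(\Cinf)$-linear (as all three left adjoints here are, by Lemma \ref{lem: opmonoidal sweedler cotensor} and Theorem \ref{thm: alg enriched over coalg}) is automatically an enriched adjunction, so nothing essential is missing.
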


In \cite[5.1.2, 5.1.4]{anel-joyal}, the adjunction (\ref{eq: I}) generalizes adjunctions from Weil restrictions and on the de Rham algebra. 
The second adjunction (\ref{eq: II}) generalizes the anti-equivalence between finite dimensional algebras and finite dimensional coalgebras of Proposition \ref{prop: finite coalg are alg}, see also Remark \ref{rem: linear dual is the dual} and Example \ref{ex: unital measuring coalgebra} above. Finally, the adjunction (\ref{eq: III}) generalizes the algebraic bar-cobar adjunction as seen in \cite[5.3.14]{anel-joyal}.

\renewcommand{\bibname}{References}
\bibliographystyle{amsalpha}
\bibliography{biblio}
\end{document}